\documentclass[letterpaper,11pt,twoside,keywordsasfootnote,addressatend,noinfoline]{article}
\usepackage{fullpage}
\usepackage[english]{babel}
\usepackage{amssymb}
\usepackage{amsmath}
\usepackage{theorem}
\usepackage{epsfig}
\usepackage{subfigure}
\usepackage{mathrsfs}
\usepackage{color}
\usepackage{imsart}

\setcounter{totalnumber}{1}

\newtheorem{theorem}{Theorem}

\newtheorem{lemma}[theorem]{Lemma}

\newenvironment{proof}{\noindent{\bf Proof.}}{\hspace*{2mm}~$\square$}
\newenvironment{proofof}[1]{\noindent{\bf Proof of #1.}}{\hspace*{2mm}~$\square$}

\newcommand{\Z}{\mathbb{Z}}

\newcommand{\C}{\mathscr{C}}
\newcommand{\G}{\mathscr{G}}
\newcommand{\V}{\mathscr{V}}
\newcommand{\E}{\mathscr{E}}
\newcommand{\ep}{\epsilon}
\newcommand{\ind}{\mathbf{1}}
\newcommand{\n}{\hspace*{-5pt}}

\DeclareMathOperator{\card}{card}


\begin{document}

\begin{frontmatter}
\title     {The simple exclusion process on finite \\ connected graphs}
\runtitle  {The simple exclusion process on finite connected graphs}
\author    {Shiba Biswal and Nicolas Lanchier}
\runauthor {Shiba Biswal and Nicolas Lanchier}
\address   {School for Engineering of Matter, \\ Transport and Energy, \\ Arizona State University, \\ Tempe, AZ 85287, USA. \\ sbiswal@asu.edu}
\address   {School of Mathematical and Statistical Sciences, \\ Arizona State University, \\ Tempe, AZ 85287, USA. \\ nicolas.lanchier@asu.edu}

\maketitle

\begin{abstract} \ \
 Consider a system of~$K$ particles moving on the vertex set of a finite connected graph with at most one particle per vertex.
 If there is one, the particle at~$x$ chooses one of the~$\deg (x)$ neighbors of its location uniformly at random at rate~$\rho_x$, and jumps
 to that vertex if and only if it is empty.
 Using standard probability techniques, we identify the set of invariant measures of this process to study the occupation time at each vertex.
 Our main result shows that, though the occupation time at vertex~$x$ increases with~$\deg (x) / \rho_x$, the ratio of the occupation times at
 two different vertices converges \emph{monotonically} to one as the number of particles increases to the number of vertices.
 The occupation times are also computed explicitly for simple examples of finite connected graphs: the star and the path.
\end{abstract}

\begin{keyword}[class=AMS]
\kwd[Primary ]{60K35}
\end{keyword}

\begin{keyword}
\kwd{Interacting particle systems, simple exclusion process, occupation time, reversibility.}
\end{keyword}

\end{frontmatter}


\section{Introduction}
\label{sec:intro}
 The simple exclusion process, introduced by Spitzer in~\cite{spitzer_1970}, is one of the most popular interacting particle systems with the voter
 model~\cite{clifford_sudbury_1973, holley_liggett_1975} and the contact process~\cite{harris_1974}.
 These three models can be viewed as spatial stochastic models of diffusion, competition, and invasion, respectively.
 In particular, the simple exclusion process consists of a system of symmetric random walks that move independently on a connected graph except that
 jumps onto already occupied vertices are suppressed (exclusion rule) so that each vertex is occupied by at most one particle. \\
\indent All three models have been studied extensively on infinite lattices, and we refer to~\cite{liggett_1985, liggett_1999} for a review
 of their main properties.
 The voter model and the contact process have also been studied on the torus in~$\Z^d$ (the time to consensus of the voter model on the torus
 is studied in~\cite{cox_1989} and the time to extinction of the contact process on the torus is studied in~\cite{durrett1, durrett2, durrett3})
 as well as various finite deterministic and random graphs.
 In contrast, to the best of our knowledge, there is no work about the simple exclusion process on finite graphs in the probability literature with
 the notable exception of the asymmetric nearest neighbor exclusion process on the finite path (each particle jumps to its immediate left or right
 with different probabilities) introduced in~\cite{liggett_1975} and reviewed in~\cite[chapter~III.3]{liggett_1999}.
 The primary motivation, however, was to study the properties of the stationary distribution on a large path and relate them to those of the
 infinite system. \\
\indent The main objective of this paper is to initiate the study of simple exclusion processes on general finite connected graphs:
 a particle at vertex~$x$ now jumps to a vertex chosen uniformly at random among the~$\deg (x)$ neighbor(s) of~$x$.
 We also assume that the rate at which particles jump depend on their location, with the particle at~$x$ jumping at rate~$\rho_x$.
 This modeling approach is motivated by engineering applications such as robotic swarms where particles represent robots moving on a finite
 graph and avoiding each other.
 An important question in this field is how should we choose the rates~$\rho_x$ so that the system converges to a fixed desired distribution?
 But regardless of its potential applications, our model is natural and of interest mathematically, and as a first step we study the invariant
 measures and prove a monotonicity property for the occupation times at different vertices that holds for all finite connected
 graphs and all choices of the rates~$\rho_x$.

\section{Main results}
\label{sec:results}
 Letting~$\G = (\V,\E)$ be a finite connected graph on~$N$ vertices, with vertex set~$\V$ and edge set~$\E$, the process is a continuous-time Markov
 chain whose state at time~$t$ is a configuration
 $$ \eta_t : \V \to \{0, 1 \} \quad \hbox{where} \quad \eta_t (x) =
    \left\{\hspace*{-3pt} \begin{array}{rl} 0 & \hbox{if vertex~$x$ is empty} \vspace*{2pt} \\
                                            1 & \hbox{if vertex~$x$ is occupied by a particle.} \end{array} \right. $$
 Motivated by potential applications in robotic swarms, we assume that particles may jump at a rate that depends on their location, and
 denote by~$\rho_x$ the rate attached to vertex~$x$.
 To describe the dynamics, for all~$x, y \in \V$, we let~$\tau_{x, y} \,\eta$ be the configuration
 $$ (\tau_{x, y} \,\eta) (z) = \eta (x) \,\ind \{z = y \} + \eta (y) \,\ind \{z = x \} + \eta (z) \,\ind \{z \neq x, y \} $$
 obtained from~$\eta$ by exchanging the states at~$x$ and~$y$.
 Then, for all~$\eta, \xi \in \{0, 1 \}^{\V}$, the process jumps from configuration~$\eta$ to configuration~$\xi$ at rate
 $$ q (\eta, \xi) = \frac{\rho_x}{\deg (x)} \ \ind \{\eta_t (x) = 1 \ \hbox{and} \ \xi = \tau_{x, y} \,\eta \ \hbox{for some} \ (x, y) \in \E \}. $$
 In words, the particle at~$x$, if there is one, chooses one of the neighbors of~$x$ uniformly at random at rate~$\rho_x$, and jumps to this vertex
 if and only if it is empty.
 It will be convenient later to identify each configuration~$\eta$ with the subset of vertices occupied by a particle:
 $$ \eta \equiv \{x \in \V : \eta (x) = 1 \} \subset \V. $$
 This defines a natural bijection between the set of configurations and the subsets of the vertex set, and it will be obvious from
 the context whether~$\eta$ refers to a configuration or a subset. \\
\indent The main objective of this paper is to study the fraction of time each vertex is occupied in the long run.
 We can prove that these limits exist and only depend on the initial configuration through its number of particles, so we will write from now on
\begin{equation}
\label{eq:limits}
  p_K (x) = \lim_{t \to \infty} P (x \in \eta_t \,| \,\card (\eta_0) = K) \quad \hbox{for all} \quad 0 < K \leq N \ \hbox{and} \ x \in \V.
\end{equation}
 To state our results, for all~$0 < K \leq N$ and~$B \subset \V$, we define
 $$ \Lambda_K^+ (B) = \{\eta \in \Lambda_K : B \cap \eta = B \} \quad \hbox{and} \quad
    \Lambda_K^- (B) = \{\eta \in \Lambda_K : B \cap \eta = \varnothing \} $$
 where~$\Lambda_K$ is the set of configurations with~$K$ particles.
 In addition, for each vertex~$z \in \V$, each subset~$\eta \subset \V$, and each collection~$\C$ of subsets of~$\V$, we let
 $$ D (z) = \frac{\deg (z)}{\rho_z}, \quad D (\eta) = \prod_{z \in \eta} \,D (z) \quad \hbox{and} \quad \Sigma (\C) = \sum_{\eta \in \C} \,D (\eta). $$
 Using that the sets~$\Lambda_K$ are closed communication classes as well as reversibility to identify the stationary distribution on each~$\Lambda_K$,
 we prove that the limits in~\eqref{eq:limits} are characterized as follows.
\begin{theorem}[occupation time] --
\label{th:limit}
 For all~$0 < K \leq N$ and~$x \in \V$,
\begin{equation}
\label{eq:OccProb}
  p_K (x) = \frac{\Sigma (\Lambda_K^+ (x))}{\Sigma (\Lambda_K)}.
\end{equation}
 In particular, the limits in~\eqref{eq:limits} are characterized by
\begin{equation}
\label{eq:ratio}
 \frac{p_K (x)}{p_K (y)} = \frac{\Sigma (\Lambda_K^+ (x))}{\Sigma (\Lambda_K^+ (y))} \quad \hbox{and} \quad \sum_{z \in \V} \,p_K (z) = K.
\end{equation}
\end{theorem}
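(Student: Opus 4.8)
The plan is to reduce the computation of the occupation probabilities to the identification of the stationary distribution of the process restricted to each level set~$\Lambda_K$. The number of particles is conserved by the dynamics, so the configuration stays in~$\Lambda_K$ when started there, making each~$\Lambda_K$ absorbing. First I would check that, for~$0 < K < N$, the process restricted to~$\Lambda_K$ is irreducible: connectivity of~$\G$ together with the existence of at least one empty vertex (a ``hole'') lets one slide particles along paths so as to transform any $K$-particle configuration into any other. Because every single transition~$\eta \to \tau_{x,y}\,\eta$ is reversible---in the target configuration~$y$ is occupied and~$x$ is empty, so the reverse jump along the same edge is also allowed---the communication relation is symmetric, and it suffices to show that every configuration can reach one fixed reference configuration. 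Since~$\Lambda_K$ is finite and irreducible, the restricted chain has a unique stationary distribution~$\pi_K$, and the convergence theorem for finite continuous-time Markov chains guarantees that the limits in~\eqref{eq:limits} exist, are independent of the particular initial configuration, and satisfy~$p_K (x) = \pi_K (\Lambda_K^+ (x))$. The degenerate case~$K = N$ is trivial, as~$\Lambda_N$ consists of a single configuration.

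The heart of the argument is to identify~$\pi_K$ explicitly using reversibility. I would guess the product form~$\pi_K (\eta) \propto D (\eta) = \prod_{z \in \eta} D (z)$ and verify the detailed balance equations $\pi_K (\eta) \,q (\eta, \tau_{x,y}\,\eta) = \pi_K (\tau_{x,y}\,\eta) \,q (\tau_{x,y}\,\eta, \eta)$ for every edge~$(x,y)$ across which a jump is possible. With a particle at~$x$ and~$y$ empty, the forward rate is~$\rho_x / \deg (x)$ and the backward rate is~$\rho_y / \deg (y)$, while~$\eta$ and~$\tau_{x,y}\,\eta$ differ only in that the factor~$D (x) = \deg (x) / \rho_x$ is replaced by~$D (y) = \deg (y) / \rho_y$. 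Substituting the expressions for~$D (\eta)$ and~$D (\tau_{x,y}\,\eta)$, both sides of the detailed balance equation collapse to~$\prod_{z \in \eta,\, z \neq x} D (z)$, so the equation holds. Normalizing over~$\Lambda_K$ then gives~$\pi_K (\eta) = D (\eta) / \Sigma (\Lambda_K)$, and since this measure is reversible it is the unique stationary distribution of the irreducible restricted chain.

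Formula~\eqref{eq:OccProb} follows immediately by summing~$\pi_K$ over the configurations in~$\Lambda_K^+ (x)$, which yields~$p_K (x) = \Sigma (\Lambda_K^+ (x)) / \Sigma (\Lambda_K)$. The first identity in~\eqref{eq:ratio} is then obtained by taking the quotient for two vertices~$x$ and~$y$, the common denominator~$\Sigma (\Lambda_K)$ cancelling. For the second identity I would interchange the order of summation: since~$\sum_{z \in \V} \ind \{z \in \eta \} = \card (\eta) = K$ for every~$\eta \in \Lambda_K$,
$$ \sum_{z \in \V} \,p_K (z) = \sum_{z \in \V} \,\pi_K (\Lambda_K^+ (z)) = \sum_{\eta \in \Lambda_K} \,\pi_K (\eta) \,\card (\eta) = K. $$
I expect the only genuinely delicate step to be the irreducibility of the process on~$\Lambda_K$, namely exhibiting an explicit routing of particles along paths in~$\G$ that avoids deadlocks; once that is in place, the reversibility computation and the combinatorial bookkeeping are routine.
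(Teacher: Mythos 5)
Your proposal is correct and follows essentially the same route as the paper: show that each~$\Lambda_K$ is a closed irreducible class by sliding particles along paths in the connected graph, identify the product-form stationary distribution~$\pi_K (\eta) \propto D (\eta)$ via detailed balance, and sum over~$\Lambda_K^+ (x)$. The only cosmetic differences are that you obtain the mass identity~$\sum_z p_K (z) = K$ by summing~$\card (\eta)$ against~$\pi_K$ rather than by linearity of expectation along the process, and you rightly flag the particle-routing (deadlock-avoidance) argument as the one step needing care --- the paper handles it by moving particles along a self-avoiding path starting from the one nearest the target.
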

 Even though the right-hand side of~\eqref{eq:OccProb} cannot be simplified in general, some interesting properties can be deduced from this expression
 for arbitrary finite connected graphs.
 It is intuitively clear that, the graph being fixed, the probability~$p_K (x)$ increases with~$K$.
 It can also be proved that this probability increases with~$D (x)$ while a more precise and challenging analysis shows that, though the occupation
 time at~$x$ is smaller than the occupation time at~$y$ when~$D (x) < D (y)$,
 the ratio of the two occupation times converges monotonically to one as the number of particles increases.
\begin{theorem}[monotonicity] --
\label{th:monotonicity}
 For all~$1 < K < N - 1$,
 $$ \frac{D (x)}{D (y)} = \frac{p_1 (x)}{p_1 (y)} < \frac{p_K (x)}{p_K (y)} < \frac{p_{K + 1} (x)}{p_{K + 1} (y)} < \frac{p_N (x)}{p_N (y)} = 1
    \quad \hbox{when} \quad D (x) < D (y). $$
\end{theorem}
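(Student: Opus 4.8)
The plan is to feed the exact formula~\eqref{eq:OccProb} into the ratio and reduce everything to elementary symmetric polynomials of the vertex weights~$D(z)$. Since a configuration in~$\Lambda_K$ is just a $K$-subset of~$\V$ and~$D(\eta)$ is the product of the corresponding weights, $\Sigma (\Lambda_K)$ is the $K$-th elementary symmetric polynomial of the~$N$ weights. To compare~$x$ and~$y$, I would single them out and let~$f_j$ denote the $j$-th elementary symmetric polynomial of the remaining~$N - 2$ weights~$\{D (z) : z \neq x, y \}$, with the conventions~$f_0 = 1$ and~$f_j = 0$ for~$j < 0$ or~$j > N - 2$. Each configuration counted in~$\Sigma (\Lambda_K^+ (x))$ contains~$x$, so factoring out~$D (x)$ and splitting according to whether it also contains~$y$ gives
$$ \Sigma (\Lambda_K^+ (x)) = D (x) \,(f_{K - 1} + D (y) \,f_{K - 2}), $$
and symmetrically for~$y$. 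By Theorem~\ref{th:limit}, this yields the compact formula
$$ \frac{p_K (x)}{p_K (y)} = \frac{D (x)}{D (y)} \cdot \frac{f_{K - 1} + D (y) \,f_{K - 2}}{f_{K - 1} + D (x) \,f_{K - 2}}. $$

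The two endpoints then follow by inspection: for~$K = 1$ the fraction is~$1$ because~$f_{-1} = 0$, giving~$p_1 (x) / p_1 (y) = D (x) / D (y)$, while for~$K = N$ we have~$f_{N - 1} = 0$ and~$f_{N - 2} > 0$, so the fraction equals~$D (y) / D (x)$ and the ratio is~$1$. It therefore suffices to prove the strict monotonicity~$p_K (x) / p_K (y) < p_{K + 1} (x) / p_{K + 1} (y)$ for every~$1 \leq K \leq N - 1$; the full chain of inequalities is then immediate, since it reads~$R_1 < R_2 < \cdots < R_N$ with~$R_1 = D (x) / D (y)$ and~$R_N = 1$. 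Abbreviating~$a = D (x)$, $b = D (y)$ and clearing the (positive) denominators, a direct computation collapses the difference of two consecutive ratios to a single sign:
$$ \frac{p_{K + 1} (x)}{p_{K + 1} (y)} - \frac{p_K (x)}{p_K (y)} = \frac{a}{b} \cdot \frac{(b - a) \,(f_{K - 1}^2 - f_K \,f_{K - 2})}{(f_K + a \,f_{K - 1})(f_{K - 1} + a \,f_{K - 2})}, $$
where, for~$1 \leq K \leq N - 1$, one checks that~$f_{K - 1} > 0$, so both factors in the denominator are positive. Since~$a < b$, the whole expression is positive exactly when~$f_{K - 1}^2 > f_K \,f_{K - 2}$.

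The heart of the argument, and the step I expect to be the main obstacle, is thus the strict log-concavity~$f_{K - 1}^2 > f_K \,f_{K - 2}$ of the elementary symmetric polynomials of the positive weights~$\{D (z) : z \neq x, y \}$. For the boundary indices this is immediate: when~$K = 1$ the right-hand side vanishes ($f_{-1} = 0 < f_0^2$), and when~$K = N - 1$ it vanishes again ($f_{N - 1} = 0 < f_{N - 2}^2$). For~$2 \leq K \leq N - 2$, all three polynomials are strictly positive, and I would invoke Newton's inequalities: the polynomial~$\prod_{z \neq x, y} (1 + D (z) \,t)$ has only real (negative) roots, so its normalized coefficients~$c_j = f_j / \binom{N - 2}{j}$ satisfy~$c_{K - 1}^2 \geq c_K \,c_{K - 2}$. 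Combining this with the strict log-concavity of the binomial coefficients, namely~$\binom{N - 2}{K - 1}^2 > \binom{N - 2}{K} \binom{N - 2}{K - 2}$ (valid precisely for~$2 \leq K \leq N - 2$), upgrades the inequality to the strict unnormalized bound~$f_{K - 1}^2 > f_K \,f_{K - 2}$.

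The delicate point is the strictness. Newton's inequality can degenerate to an equality, but this happens only when all the weights coincide, and in that case the strict gap in the binomial inequality still forces~$f_{K - 1}^2 > f_K \,f_{K - 2}$; so the bound is strict throughout the required range of~$K$. If a self-contained argument is preferred to citing Newton's inequalities, the same estimate can be obtained by induction on the number of weights, peeling off one variable at a time and applying Rolle's theorem to the derivative of the generating polynomial, which preserves real-rootedness and hence log-concavity of the coefficients.
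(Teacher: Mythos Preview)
Your proof is correct and structurally parallels the paper's: both decompose $\Sigma(\Lambda_K^+(x))$ by conditioning on whether $y$ is occupied, arriving at the same ratio (your $D(x)(f_{K-1}+D(y)f_{K-2})$ is exactly the paper's $A_K + D(x)B_K$, with $A_K = D(x)D(y)f_{K-2}$ and $B_K = f_{K-1}$), and both reduce the monotonicity step to the strict log-concavity $f_{K-1}^2 > f_K f_{K-2}$ of the elementary symmetric polynomials in the remaining $N-2$ weights. The only substantive difference is how that key inequality is established. The paper proves it from scratch as Lemma~\ref{lem:injection} via a combinatorial double-counting argument: it partitions $\Lambda_{K+1}\times\Lambda_{K-1}$ and $\Lambda_K\times\Lambda_K$ according to the occupation profile $u_i = \ind\{x_i\in\eta\}+\ind\{x_i\in\eta'\}$ and compares block sizes through the binomial inequality $\binom{K_1}{K+1-K_2}<\binom{K_1}{K-K_2}$. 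You instead recognize the inequality as Newton's inequalities for the real-rooted polynomial $\prod_{z\neq x,y}(1+D(z)t)$, upgraded to a strict bound by the strict log-concavity of the binomial coefficients. Your route is shorter and places the result in its natural algebraic context; the paper's route is fully self-contained and avoids citing an external result.
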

 It follows from (the proof of) the theorem that, when all the~$D (x)$ are equal, all the vertices are equally likely to be occupied at equilibrium.
 In particular, assuming for simplicity that the particles always jump at rate~$\rho_x \equiv 1$, all the vertices are occupied with the same
 probability~$K/N$ for the process on finite regular graphs.
 Along these lines, the probabilities in~\eqref{eq:OccProb} can be computed explicitly when~$\rho_x \equiv 1$ and most of the vertices have the
 same degree.
 For instance, for the star graph in which all the vertices have degree one except the center, we have the following result.
\begin{theorem}[star] --
\label{th:star}
 For the star graph with~$N$ vertices and center~0,
 $$ \begin{array}{rcl}
    \displaystyle p_K (0) & \n = \n & \displaystyle \frac{(N - 1) K}{(N - 1) K + (N - K)} \vspace*{8pt} \\
    \displaystyle P_K (x) & \n = \n & \displaystyle \bigg(\frac{K}{N - 1} \bigg) \frac{(N - 1) K - (K - 1)}{(N - 1) K + (N - K)}
    \quad \hbox{for} \ \ x \neq 0. \end{array} $$
\end{theorem}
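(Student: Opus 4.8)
The plan is to specialize the occupation-time formula~\eqref{eq:OccProb} of Theorem~\ref{th:limit} to the star, where the computation collapses because all leaves carry the same weight. Since we assume~$\rho_x \equiv 1$, we have~$D(z) = \deg(z)$, so that~$D(0) = N - 1$ for the center while~$D(x) = 1$ for each of the~$N - 1$ leaves. Because~$D(\eta) = \prod_{z \in \eta} D(z)$ and every leaf contributes a factor one, the weight of a configuration~$\eta \in \Lambda_K$ depends only on whether it contains the center:
$$ D (\eta) = N - 1 \ \hbox{ when } \ 0 \in \eta \qquad \hbox{and} \qquad D (\eta) = 1 \ \hbox{ when } \ 0 \notin \eta. $$

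First I would compute the normalizing constant~$\Sigma (\Lambda_K)$ by splitting~$\Lambda_K$ according to the state of the center. A configuration containing the center is obtained by choosing its remaining~$K - 1$ particles among the~$N - 1$ leaves, giving~$\binom{N - 1}{K - 1}$ such configurations each of weight~$N - 1$; a configuration avoiding the center places all~$K$ particles on leaves, giving~$\binom{N - 1}{K}$ configurations each of weight one. Hence
$$ \Sigma (\Lambda_K) = (N - 1) \binom{N - 1}{K - 1} + \binom{N - 1}{K}. $$
The collection~$\Lambda_K^+ (0)$ is exactly the first group, so~$\Sigma (\Lambda_K^+ (0)) = (N - 1) \binom{N - 1}{K - 1}$. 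Substituting into~\eqref{eq:OccProb} and using the elementary identity~$\binom{N - 1}{K} = \tfrac{N - K}{K} \binom{N - 1}{K - 1}$ to cancel the common binomial factor yields the stated expression for~$p_K (0)$ after multiplying numerator and denominator by~$K$.

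For a leaf~$x \neq 0$, rather than recounting~$\Sigma (\Lambda_K^+ (x))$ directly, I would invoke the symmetry of the star under permutations of its leaves, which forces~$p_K (x)$ to be the same for all leaves, together with the normalization~$\sum_{z \in \V} p_K (z) = K$ from~\eqref{eq:ratio}. These give~$p_K (x) = (K - p_K (0)) / (N - 1)$, and substituting the formula for~$p_K (0)$ and simplifying the numerator to~$K [(N - 1) K - (K - 1)]$ produces the claimed expression. (One could instead split~$\Lambda_K^+ (x)$ into the two cases above, obtaining~$\Sigma (\Lambda_K^+ (x)) = (N - 1) \binom{N - 2}{K - 2} + \binom{N - 2}{K - 1}$, but the symmetry argument is shorter.)

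The computation carries no genuine difficulty beyond careful bookkeeping: the only points requiring attention are the correct binomial counts in each of the two cases and the algebraic simplification of the ratios. The role of the exclusion dynamics and reversibility is entirely absorbed into Theorem~\ref{th:limit}, so the star reduces to a weighted counting problem in which the two-valued weight structure makes everything explicit.
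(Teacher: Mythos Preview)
Your proof is correct and tracks the paper's argument closely: both compute~$\Sigma(\Lambda_K)$ and~$\Sigma(\Lambda_K^+(0))$ by the same case split on whether the center is occupied, and both reduce via the identity~$\binom{N-1}{K} = \tfrac{N-K}{K}\binom{N-1}{K-1}$. The one genuine difference is your treatment of the leaves: the paper computes~$\Sigma(\Lambda_K^+(x)) = (N-1)\binom{N-2}{K-2} + \binom{N-2}{K-1}$ directly by a second case split and then simplifies the resulting ratio, whereas you bypass this via leaf symmetry and the normalization~$\sum_z p_K(z) = K$ from~\eqref{eq:ratio} to get~$p_K(x) = (K - p_K(0))/(N-1)$. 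Your route is shorter and avoids one layer of binomial manipulation; the paper's route has the minor advantage of also producing~$\Sigma(\Lambda_K^+(x))$ explicitly, which it then reuses to write the ratio~$p_K(x)/p_K(0)$ in closed form and check consistency with Theorem~\ref{th:monotonicity}. You already note the direct computation as an alternative, so nothing is missing.
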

 For the path graph in which all the vertices have degree two except the two endpoints, the algebra is a little more complicated but we can prove the
 following result.
\begin{theorem}[path] --
\label{th:path}
 For the path graph with vertex set~$\V = \{0, 1, \ldots, N - 1 \}$,
 $$ \begin{array}{rcl}
    \displaystyle p_K (x) & \n = \n & \displaystyle \frac{(2N - K - 1) K}{(K - 1) K + 4 (N - K)(N - 1)} \quad \hbox{for} \ \ x = 0, N - 1 \vspace*{8pt} \\
    \displaystyle p_K (x) & \n = \n & \displaystyle \bigg(\frac{K}{N - 2} \bigg) \frac{(K - 2)(K - 1) + 4 (N - K)(N - 2)}{(K - 1) K + 4 (N - K)(N - 1)}
    \quad \hbox{for} \ \ 0 < x < N - 1. \end{array} $$
\end{theorem}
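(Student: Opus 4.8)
The plan is to invoke Theorem~\ref{th:limit} and reduce the whole problem to a weighted count of subsets of the path. Since the rates are $\rho_x \equiv 1$, the weight of a single vertex is $D (z) = \deg (z)$, which equals~$1$ at the two endpoints~$0$ and~$N - 1$ and equals~$2$ at each of the~$N - 2$ interior vertices. Consequently, a configuration~$\eta \in \Lambda_K$ containing exactly~$j$ interior vertices satisfies $D (\eta) = 2^j$, so the computation amounts to sorting the $K$-element subsets of~$\V$ according to how many of the two endpoints they contain.

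First I would compute the normalizing sum~$\Sigma (\Lambda_K)$. Grouping configurations by the number $a \in \{0, 1, 2 \}$ of endpoints they occupy, there are $\binom{2}{a} \binom{N-2}{K-a}$ such configurations, each carrying weight~$2^{K-a}$, so that
\[
 \Sigma (\Lambda_K) = \sum_{a = 0}^{2} \binom{2}{a} \binom{N-2}{K-a} \, 2^{K-a}
 = 2^{K-2} \Big( 4 \tbinom{N-2}{K} + 4 \tbinom{N-2}{K-1} + \tbinom{N-2}{K-2} \Big).
\]
The same grouping gives the two numerators. For an endpoint, say~$x = 0$, fixing~$0$ and sorting the remaining $K - 1$ vertices by whether they include the other endpoint yields
\[
 \Sigma (\Lambda_K^+ (0)) = 2^{K-2} \Big( 2 \tbinom{N-2}{K-1} + \tbinom{N-2}{K-2} \Big),
\]
while for an interior vertex~$x$, fixing~$x$ (weight~$2$) and sorting the remaining $K - 1$ vertices among the two endpoints and the $N - 3$ other interior vertices gives
\[
 \Sigma (\Lambda_K^+ (x)) = 2^{K-2} \Big( 4 \tbinom{N-2}{K-1} + \tbinom{N-3}{K-3} \Big).
\]

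Next I would form the ratios $p_K (x) = \Sigma (\Lambda_K^+ (x)) / \Sigma (\Lambda_K)$, cancel the common factor~$2^{K-2}$, and divide numerator and denominator by~$\binom{N-2}{K}$. The elementary identities $\binom{N-2}{K-1} / \binom{N-2}{K} = K / (N - 1 - K)$ and $\binom{N-2}{K-2} / \binom{N-2}{K} = K (K-1) / [(N-1-K)(N-K)]$ turn every quotient into a rational function of~$N$ and~$K$; for the interior case one first rewrites $\binom{N-3}{K-3} = \tfrac{K-2}{N-2} \binom{N-2}{K-2}$ so that it shares the same reference binomial. After clearing the common denominator $(N-1-K)(N-K)$ from top and bottom, that factor cancels and the stated formulas drop out, the crucial simplifications being $4 (N-1-K)(N-K) + 4 K (N-K) = 4 (N-1)(N-K)$ in the denominator and $2 (N-K) + (K-1) = 2N - K - 1$ in the endpoint numerator.

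The computation is entirely elementary, so the only real obstacle is bookkeeping: one must notice that the interior numerator naturally appears in terms of $\binom{N-3}{K-3}$ rather than $\binom{N-2}{K-2}$, and convert it before combining fractions, since this conversion is exactly what produces the prefactor~$K / (N-2)$. As a reassuring check I would verify the constraint $\sum_{z} p_K (z) = K$ in~\eqref{eq:ratio}, which by symmetry reads $2 \, p_K (0) + (N-2) \, p_K (x) = K$ and reduces to the polynomial identity $2 (2N - K - 1) + 4 (N-2)(N-K) + (K-2)(K-1) = (K-1) K + 4 (N-K)(N-1)$.
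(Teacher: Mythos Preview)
Your proposal is correct and follows essentially the same route as the paper: both compute $\Sigma(\Lambda_K)$, $\Sigma(\Lambda_K^+(0))$ and $\Sigma(\Lambda_K^+(x))$ by sorting configurations according to how many of the two endpoints they contain, arriving at the same binomial expressions, and then reduce the ratios via standard binomial identities. The only cosmetic difference is that you normalize by dividing through by $\binom{N-2}{K}$, whereas the paper converts between the various binomials directly; the algebra is the same.
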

 Taking the ratio of the probabilities in both theorems~(see~\eqref{eq:star-5} and~\eqref{eq:path-5} below) gives results in agreement
 with Theorem~\ref{th:monotonicity}.
 Figure~\ref{fig:examples} shows~\eqref{eq:OccProb} obtained from numerical simulations of the simple exclusion process on the star, the path and the
 two-dimensional grid with~25 vertices, along with our analytical results for the first two graphs.
\begin{figure}[t]
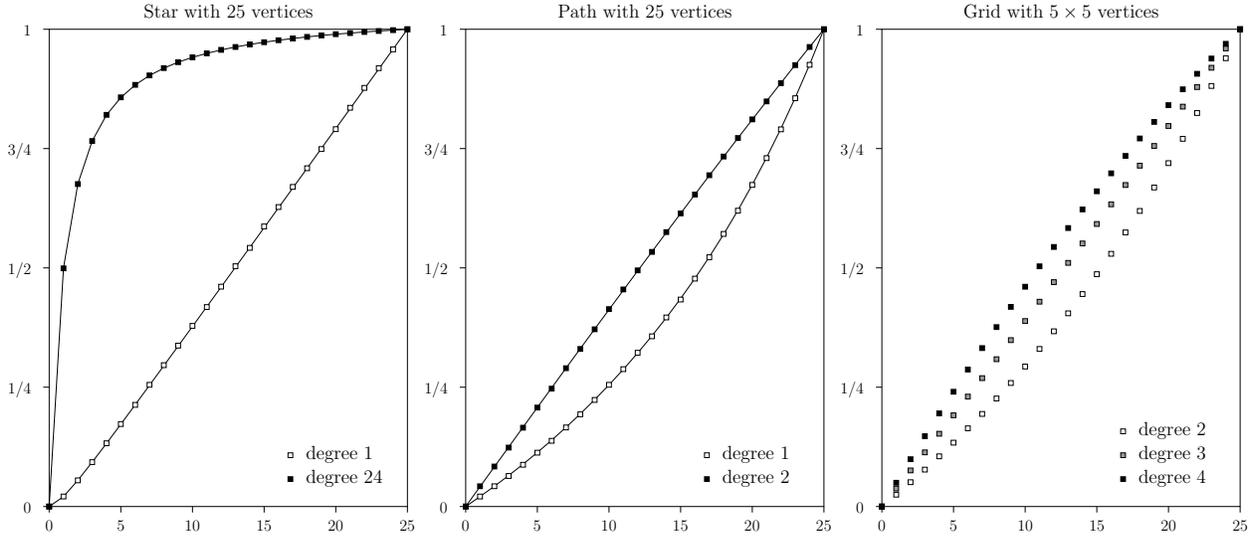

\centering
\scalebox{0.50}{\input{star-graph.pstex_t}}
\scalebox{0.50}{\input{path-graph.pstex_t}}
\scalebox{0.50}{\input{grid-graph.pstex_t}}
\caption{\upshape{Limiting behavior of the simple exclusion process on three specific graphs with~25 vertices and~$\rho_x \equiv 1$.
                  The horizontal axis represents the number of particles~($K$), and the vertical axis represents the fraction of time vertices with
                  degree~$d$ are occupied ($p_K (x)$).
                  The squares are obtained from simulating the process for~$10^8$ units of time, while the curves in the first two pictures show the analytical
                  results from Theorems~\ref{th:star} and~\ref{th:path}.}}
\label{fig:examples}
\end{figure}


\section{Proof of Theorem~\ref{th:limit}}
\label{sec:limit}
 Note that the simple exclusion process is not irreducible because configurations with different numbers of particles do not communicate.
 However, the set of configurations with~$K$ particles forms a closed communication class, so the process restricted to~$\Lambda_K$ is irreducible and
 converges to a unique stationary distribution.
 To find this stationary distribution and prove the theorem, we will also use reversibility.
 To show that~$\Lambda_K$ is a communication class, we first prove that any two configurations in~$\Lambda_K$ that only differ in two vertices communicate.
\begin{lemma} --
\label{lem:x-y}
 For all~$ \eta \in \{0, 1 \}^{\V}$ and~$x, y \in \V$,
 $$ P ( \eta_t = \tau_{x, y} \, \eta \,| \, \eta_0 =  \eta) > 0 \quad \hbox{for all} \quad t > 0. $$
\end{lemma}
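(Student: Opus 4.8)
The plan is to reduce the lemma to a purely combinatorial reachability statement and then invoke a standard property of finite-state continuous-time Markov chains. First I would dispose of the trivial case~$\eta (x) = \eta (y)$: here~$\tau_{x, y} \,\eta = \eta$, and since~$\V$ is finite the total rate~$q (\eta)$ at which the process leaves~$\eta$ is finite, so~$P (\eta_t = \eta \,| \,\eta_0 = \eta) \geq e^{- q (\eta) \,t} > 0$. It therefore suffices to treat the case~$\eta (x) \neq \eta (y)$, and by symmetry I may assume~$\eta (x) = 1$ and~$\eta (y) = 0$, so that~$\tau_{x, y} \,\eta$ is obtained from~$\eta$ by moving the particle at~$x$ to the empty vertex~$y$ while leaving every other vertex unchanged.

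The heart of the argument is to exhibit a finite sequence of configurations~$\eta = \xi_0, \xi_1, \ldots, \xi_n = \tau_{x, y} \,\eta$ with~$q (\xi_{i - 1}, \xi_i) > 0$ for each~$i$, i.e.\ a chain of legal exclusion jumps joining~$\eta$ to~$\tau_{x, y} \,\eta$. The naive idea of pushing the particle along a path from~$x$ to~$y$ fails as soon as some intermediate vertex is occupied, and dealing with these blockages head-on is the main obstacle; I would sidestep it by working at the level of permutations of~$\V$ acting on configurations through~$(\pi \eta)(z) = \eta (\pi^{-1} (z))$, an action under which the paper's operator~$\tau_{x, y}$ is exactly the transposition swapping~$x$ and~$y$. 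Since~$\G$ is connected, fix a self-avoiding path~$x = z_0, z_1, \ldots, z_m = y$. The conjugation identity~$\tau_{z_0, z_m} = \tau_{z_0, z_1} \,\tau_{z_1, z_m} \,\tau_{z_0, z_1}$, valid because~$\tau_{z_0, z_1}$ fixes~$z_m$ and sends~$z_1$ to~$z_0$, lets me peel off one edge at a time and, by induction on~$m$, write~$\tau_{x, y}$ as a product of transpositions associated with the edges~$(z_{i - 1}, z_i) \in \E$ of the path.

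Reading this product from right to left and applying one edge-transposition at a time to~$\eta$ produces the desired sequence of configurations: at each step the transposition~$\tau_{z_{i - 1}, z_i}$ either acts on an edge whose two endpoints currently carry the same state, in which case it leaves the configuration unchanged and can be discarded, or it acts on an edge carrying a particle on one end and a hole on the other, in which case it realizes a genuine exclusion jump of rate~$\rho_{z_{i - 1}} / \deg (z_{i - 1})$ or~$\rho_{z_i} / \deg (z_i)$, hence positive. Discarding the no-ops leaves precisely a chain~$\eta = \xi_0, \ldots, \xi_n = \tau_{x, y} \,\eta$ of legal jumps, which is what is needed; note that each transposition preserves the number of occupied vertices, so all the~$\xi_i$ lie in the same~$\Lambda_K$ as~$\eta$.

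Finally I would conclude with the standard fact that, on a finite state space, the existence of a chain of positive-rate transitions from~$\eta$ to~$\xi$ forces~$P (\eta_t = \xi \,| \,\eta_0 = \eta) > 0$ for every~$t > 0$: the embedded jump chain follows the prescribed transitions with positive probability, and the successive holding times can be chosen, again with positive probability, so that the~$n$ jumps all occur before time~$t$ while the chain then remains at~$\xi$ until time~$t$. Applying this to the chain constructed above yields the claim. The only genuinely delicate point is the combinatorial one in the middle step; once~$\tau_{x, y}$ is decomposed along a path, the probabilistic conclusion is routine.
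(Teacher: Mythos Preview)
Your proof is correct and takes a genuinely different route from the paper. The paper also fixes a self-avoiding path~$x = z_1, \ldots, z_l = y$, but instead of decomposing~$\tau_{x,y}$ algebraically it works directly with the configuration: it lists the occupied positions~$z_{i(1)}, \ldots, z_{i(k)}$ along the path (with~$i(1) = 1$) and then moves the particle at~$z_{i(k)}$ forward to~$y$ through the intervening empty vertices, then the particle at~$z_{i(k-1)}$ forward to~$z_{i(k)}$, and so on---each move being a string of single-edge jumps through vertices that are guaranteed empty at that moment. Your conjugation identity~$\tau_{z_0,z_m} = \tau_{z_0,z_1}\,\tau_{z_1,z_m}\,\tau_{z_0,z_1}$ achieves the same end more abstractly: by induction it writes~$\tau_{x,y}$ as a word of length~$2m-1$ in edge-transpositions, and the no-op letters in that word absorb the blocking issue automatically, so you never have to track which path vertices are occupied. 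The trade-off is that the paper's explicit construction uses exactly~$l-1$ jumps with no wasted steps, whereas your word is roughly twice as long before the no-ops are discarded; but since only reachability is at stake, your permutation-theoretic argument is arguably cleaner and certainly adequate.
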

\begin{proof}
 The result is obvious when~$ \eta (x) =  \eta (y)$ because in this case~$\tau_{x, y} \, \eta =  \eta$.
 The result is also clear when~$x$ and~$y$ are not in the same state but connected by an edge because
\begin{equation}
\label{eq:x-y-1}
  q (\eta, \tau_{x, y} \,\eta) = \lim_{\ep \downarrow 0} \ \frac{P ( \eta_{t + \ep} = \tau_{x, y} \, \eta \,| \, \eta_t =  \eta)}{\ep}
                               = \frac{\rho_x}{\deg (x)} \quad \hbox{for all} \quad (x, y) \in \E.
\end{equation}
 To deal with the nontrivial case when the vertices are neither in the same state nor connected by an edge, we may assume without loss of generality
 that, in configuration~$ \eta$, vertex~$x$ is occupied and vertex~$y$ empty.
 Because the graph~$\G$ is connected, there exists a self-avoiding path
 $$ (z_1, z_2, \ldots, z_l) \subset \V \quad \hbox{with} \quad (z_i, z_{i + 1}) \in \E, \ z_1 = x \ \hbox{and} \ z_l = y $$
 connecting~$x$ and~$y$.
 Due to the absence of cycles, we have also~$l \leq N$.
 To remove the particle at~$x$ and put a particle at~$y$ without changing the state of the other vertices, we let
 $$ \{i : z_i \in \eta \} = \{z_{i (1)}, z_{i (2)}, \ldots, z_{i (k)} \} \quad \hbox{with} \quad 1 = i (1) < i (2) < \cdots < i (k) < l $$
 be the set of vertices along the self-avoiding path that are occupied in configuration~$ \eta$.
 It is also convenient to set~$i (k + 1) = l$.
 To obtain configuration~$\tau_{x, y} \, \eta$ from~$ \eta$, the basic idea, which is illustrated in Figure~\ref{fig:move}, is to move the particle
 at~$z_{i (k)}$ to~$z_{i (k + 1)}$, then the particle at~$z_{i (k - 1)}$ to~$z_{i (k)}$, and so on.
 To prove that this sequence of events indeed occurs with positive probability, note that, because the
 vertices~$z_{i (k) + 1}, z_{i (k) + 2}, \ldots, z_{i (k + 1)}$ are empty,
 $$  \tau_{z_{i (k)}, z_i (k + 1)} \,\eta =
    (\tau_{z_{i (k + 1) - 1}, z_{i (k + 1)}} \circ \cdots \circ \tau_{z_{i (k) + 1}, z_{i (k) + 2}} \circ \tau_{z_{i (k)}, z_{i (k) + 1}})(\eta). $$
 This, together with~\eqref{eq:x-y-1}, implies that
 $$ P (\eta_t = \tau_{z_{i (k)}, z_{i (k + 1)}} \,\eta \,| \,\eta_0 = \eta) > 0 \quad \hbox{for all} \quad t > 0. $$
\begin{figure}[t]
\centering
\scalebox{0.50}{\input{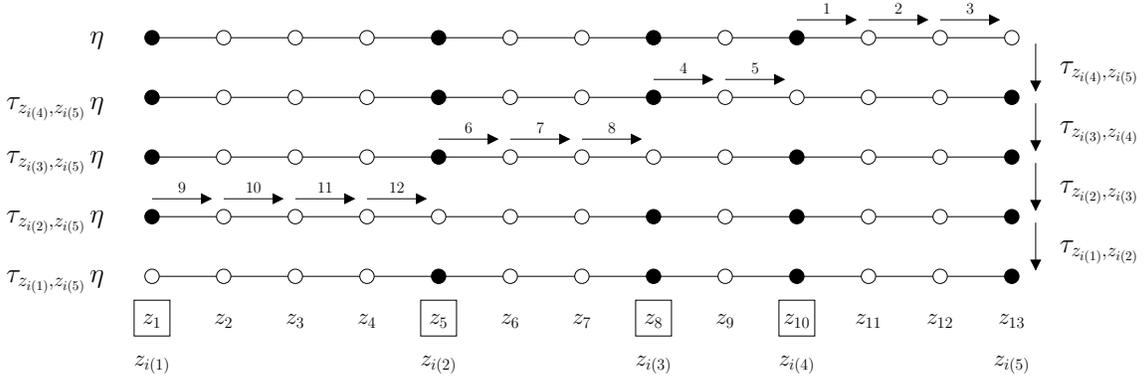}}
\caption{\upshape{Evolution along the self-avoiding path~$(z_1, z_2, \ldots, z_l)$.}}
\label{fig:move}
\end{figure}
 Similarly, we prove by induction that, for~$j = 1, 2, \ldots, k$,
\begin{equation}
\label{eq:x-y-2}
\begin{array}{l}
  P (  \eta_t = \tau_{z_{i (j)}, z_{i (k + 1)}} \, \eta \,| \, \eta_0 = \tau_{z_{i (j + 1)}, z_{i (k + 1)}} \,  \eta) \vspace*{4pt} \\ \hspace*{40pt} =
  P (  \eta_t = \tau_{z_{i (j)}, z_{i (j + 1)}} (\tau_{z_{i (j + 1)}, z_{i (k + 1)}} \, \eta) \,| \,  \eta_0 = \tau_{z_{i (j + 1)}, z_{i (k + 1)}} \, \eta) > 0
\end{array}
\end{equation}
 for all~$t > 0$.
 In addition, for~$j = 1, 2, \ldots, k$,
\begin{equation}
\label{eq:x-y-3}
  \tau_{z_{i (j)}, z_{i (k + 1)}} \,  \eta =
 (\tau_{z_{i (j)}, z_{i (j + 1)}} \circ \cdots \circ \tau_{z_{i (k - 1)}, z_{i (k)}} \circ \tau_{z_{i (k)}, z_{i (k + 1)}})(  \eta).
\end{equation}
 Using~\eqref{eq:x-y-2} and~\eqref{eq:x-y-3}, and that~$z_{i (1)} = x$ and~$z_{i (k + 1)} = y$, we deduce that
 $$ \begin{array}{l}
    \displaystyle  P (  \eta_{kt} = \tau_{x, y} \,  \eta \,| \,  \eta_0 =   \eta) \geq
    \displaystyle \prod_{j = 1}^k \,P (  \eta_{(k - j + 1) t} = \tau_{z_{i (j)}, z_{i (k + 1)}} \,  \eta \,| \,  \eta_{(k - j) t} = \tau_{z_{i (j + 1)}, z_{i (k + 1)}} \,  \eta) \vspace*{-8pt} \\ \hspace*{60pt} =
    \displaystyle \prod_{j = 1}^k \,P (  \eta_t = \tau_{z_{i (j)}, z_{i (k + 1)}} \,  \eta \,| \,  \eta_0 = \tau_{z_{i (j + 1)}, z_{i (k + 1)}} \,  \eta) > 0 \end{array} $$
 for all~$t > 0$.
 This completes the proof.
\end{proof}
\begin{lemma} --
\label{lem:class}
 For all~$K = 0, 1, \ldots, N$, the set~$\Lambda_K$ is a closed communication class.
\end{lemma}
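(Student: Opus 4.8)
The plan is to verify the two defining properties of a closed communication class in turn: that the chain cannot leave~$\Lambda_K$, and that any two configurations of~$\Lambda_K$ communicate. The closedness is the easy half. Every admissible transition out of a configuration~$\eta$ sends it to some~$\tau_{x, y} \,\eta$ with~$(x, y) \in \E$, and since exchanging the states at two vertices leaves the total number of particles unchanged, we have~$\card (\tau_{x, y} \,\eta) = \card (\eta)$. Hence, starting from any~$\eta \in \Lambda_K$, the chain only ever visits configurations with exactly~$K$ particles, so~$q (\eta, \xi) = 0$ whenever~$\eta \in \Lambda_K$ and~$\xi \notin \Lambda_K$. This is precisely the assertion that~$\Lambda_K$ is closed, and it holds for every~$K = 0, 1, \ldots, N$.

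For the communication property, the cases~$K = 0$ and~$K = N$ are immediate because~$\Lambda_0$ and~$\Lambda_N$ are singletons (the all-empty and all-occupied configurations), which are trivially communication classes. So I would focus on~$0 < K < N$ and fix two distinct configurations~$\eta, \xi \in \Lambda_K$. Because~$\card (\eta) = \card (\xi) = K$, the sets~$\eta \setminus \xi$ and~$\xi \setminus \eta$ have the same cardinality~$m \geq 1$. The idea is to reduce the disagreement between the current configuration and~$\xi$ two vertices at a time: as long as a configuration~$\zeta \in \Lambda_K$ differs from~$\xi$, I can choose~$u \in \zeta \setminus \xi$ (so~$u$ is occupied in~$\zeta$) and~$v \in \xi \setminus \zeta$ (so~$v$ is empty in~$\zeta$), which exist since these sets are nonempty and equinumerous, and pass to~$\tau_{u, v} \,\zeta$. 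This move transfers a particle from~$u$ onto the empty vertex~$v$, keeps the particle count at~$K$, and decreases~$\card (\zeta \,\triangle\, \xi)$ by two. Iterating yields a chain~$\eta = \zeta_0, \zeta_1, \ldots, \zeta_m = \xi$ in~$\Lambda_K$ in which consecutive configurations differ by a single transposition.

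It then remains to chain together the individual steps. Lemma~\ref{lem:x-y} applied at each step gives~$P (\eta_t = \zeta_{j + 1} \,| \,\eta_0 = \zeta_j) > 0$ for all~$t > 0$, and the Markov property yields~$P (\eta_{mt} = \xi \,| \,\eta_0 = \eta) \geq \prod_{j = 0}^{m - 1} P (\eta_t = \zeta_{j + 1} \,| \,\eta_0 = \zeta_j) > 0$; running the same construction with the roles of~$\eta$ and~$\xi$ reversed shows that~$\eta$ is reachable from~$\xi$ as well, so the two configurations communicate. I expect no real obstacle here: the only genuine content is the elementary observation that a transposition between an occupied and an empty vertex strictly lowers the disagreement with the target, and the connectivity of~$\G$ --- where the actual difficulty lies --- has already been fully absorbed into Lemma~\ref{lem:x-y}, whose conclusion holds for an \emph{arbitrary} pair~$x, y$ rather than only for adjacent ones. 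Connectivity therefore enters the present argument only implicitly, through that lemma.
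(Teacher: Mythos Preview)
Your proposal is correct and follows essentially the same approach as the paper: closedness from conservation of the particle count, and communication by matching the occupied-but-should-be-empty vertices with the empty-but-should-be-occupied ones and invoking Lemma~\ref{lem:x-y} for each resulting transposition, then chaining via the Markov property. The only cosmetic difference is that the paper lists the source and target sets~$S, T$ all at once and writes~$\xi = (\tau_{x_1, y_1} \circ \cdots \circ \tau_{x_k, y_k})(\eta)$, whereas you phrase the same construction as an iterative reduction of the symmetric difference.
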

\begin{proof}
 The fact that~$\Lambda_K$ is closed is an immediate consequence of the fact that the number~$K$ of particles is preserved by the dynamics.
 To prove that this set is also a communication class, fix two configurations~$  \eta$ and~$  \xi$ with~$K$ particles, and define the sets
 $$ S = \{z \in \V :   \eta (z) = 1, \,  \xi (z) = 0 \} \quad \hbox{and} \quad T = \{z \in \V :   \eta (z) = 0, \,  \xi (z) = 1 \} $$
 that we call respectively the source set and the target set.
 Because~$  \eta$ and~$  \xi$ have the same number of particles, these sets have the same number of vertices, and we write
 $$ S = \{x_1, x_2, \ldots, x_k \} \subset \V \quad \hbox{and} \quad T = \{y_1, y_2, \ldots, y_k \} \subset \V. $$
 By definition of the source and target sets,
 $$   \xi = (\tau_{x_1, y_1} \circ \tau_{x_2, y_2} \circ \cdots \circ \tau_{x_k, y_k})(  \eta). $$
 In particular, letting~$\sigma_j = \tau_{x_1, y_1} \circ \cdots \circ \tau_{x_j, y_j}$ and using Lemma~\ref{lem:x-y}, we get
 $$ \begin{array}{l}
    \displaystyle P (  \eta_{kt} =   \xi \,| \,  \eta_0 =   \eta) \geq
    \displaystyle \prod_{j = 1}^k P (  \eta_{jt} = \sigma_j \,  \eta \,| \,  \eta_{(j - 1) t} = \sigma_{j - 1} \,  \eta) \vspace*{-8pt} \\ \hspace*{140pt} =
    \displaystyle \prod_{j = 1}^k P (  \eta_t = \tau_{x_j, y_j} (\sigma_{j - 1} \,  \eta) \,| \,  \eta_0 = \sigma_{j - 1} \,  \eta) > 0 \end{array} $$
 for all~$t > 0$.
 Since this holds for any two configurations in~$\Lambda_K$ and since configurations outside~$\Lambda_K$ cannot be reached from~$\Lambda_K$,
 the result follows.
\end{proof} \\ \\
%
 We now use reversibility to identify the stationary distributions.
\begin{lemma} --
\label{lem:reversible}
 For all~$K = 0, 1, \ldots, N$, the distribution
 $$ \pi_K (  \eta) = \frac{D (\eta)}{\Sigma (\Lambda_K)} \quad \hbox{for all} \quad   \eta \in \Lambda_K $$
 is a reversible distribution concentrated on~$\Lambda_K$.
\end{lemma}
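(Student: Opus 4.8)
The plan is to verify the detailed balance (reversibility) equations
$$ \pi_K (\eta) \,q (\eta, \xi) = \pi_K (\xi) \,q (\xi, \eta) \quad \hbox{for all} \quad \eta, \xi \in \Lambda_K, $$
since these constitute exactly the definition of reversibility. That~$\pi_K$ is a genuine probability measure concentrated on~$\Lambda_K$ is immediate: every weight~$D (\eta)$ is positive, and dividing by~$\Sigma (\Lambda_K) = \sum_{\eta \in \Lambda_K} D (\eta)$ normalizes the total mass to one while leaving all mass on~$\Lambda_K$.

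First I would dispose of the trivial cases. When~$\eta = \xi$ the identity holds tautologically. When~$\xi$ cannot be obtained from~$\eta$ by swapping the states of two adjacent vertices, both rates~$q (\eta, \xi)$ and~$q (\xi, \eta)$ vanish and detailed balance reduces to~$0 = 0$. The only case requiring work is therefore~$\xi = \tau_{x, y} \,\eta$ for some edge~$(x, y) \in \E$ with~$x$ occupied and~$y$ empty in~$\eta$; then~$q (\eta, \xi) = \rho_x / \deg (x)$, and the reverse transition, which moves the particle back from~$y$ to~$x$, occurs at rate~$q (\xi, \eta) = \rho_y / \deg (y)$.

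For this case, the key observation is that~$\eta$ and~$\xi$ coincide off the pair~$\{x, y \}$, so~$\eta \setminus \{x \} = \xi \setminus \{y \}$ as occupied sets and consequently
$$ \frac{D (\eta)}{D (\xi)} = \frac{D (x)}{D (y)} = \frac{\deg (x) / \rho_x}{\deg (y) / \rho_y}. $$
Since the jump rate satisfies~$\rho_x / \deg (x) = 1 / D (x)$, the rate out of~$\eta$ exactly cancels the extra factor~$D (x)$ carried by the vertex that moves. Substituting the two rates into the detailed balance equation and clearing the common denominator~$\Sigma (\Lambda_K)$ reduces it to~$D (\eta) / D (x) = D (\xi) / D (y)$, which is precisely the identity~$\prod_{z \in \eta \setminus \{x \}} D (z) = \prod_{z \in \xi \setminus \{y \}} D (z)$ already noted. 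This completes the verification.

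There is no substantial obstacle here; the only point deserving care is the bookkeeping of which vertex moves and the attendant cancellation, namely recognizing that the jump rate~$\rho_x / \deg (x)$ was chosen exactly so as to equal~$1 / D (x)$, thereby rendering the product weights~$D (\eta)$ reversible. Combined with Lemma~\ref{lem:class}, which guarantees that~$\Lambda_K$ is an irreducible closed communication class, the reversibility of~$\pi_K$ identifies it as the unique stationary distribution on~$\Lambda_K$.
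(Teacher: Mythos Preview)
Your proof is correct and follows essentially the same approach as the paper: verify detailed balance by disposing of the trivial cases and then, for~$\xi = \tau_{x,y}\,\eta$ with~$x$ occupied and~$y$ empty, use the key identity~$\eta \setminus \{x\} = \xi \setminus \{y\}$ together with~$q(\eta,\xi) = 1/D(x)$ to reduce~$\pi_K(\eta)\,q(\eta,\xi) = \pi_K(\xi)\,q(\xi,\eta)$ to~$D(\eta \setminus \{x\}) = D(\xi \setminus \{y\})$. The paper's proof is organized identically, differing only in presentation.
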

\begin{proof}
 Let~$  \eta,   \xi \in \Lambda_K$, $  \eta \neq   \xi$. Then,
 $$ q (  \eta,   \xi) = q (  \xi,   \eta) = 0 \quad \hbox{when} \quad   \xi \neq \tau_{x, y} \,  \eta \quad \hbox{for all} \quad (x, y) \in \E $$
 in which case it is clear that
\begin{equation}
\label{eq:reversible-1}
  \pi_K (  \eta) \,q (  \eta,   \xi) = \pi_K (  \xi) \,q (  \xi,   \eta) = 0.
\end{equation}
 When~$\xi = \tau_{x, y} \,\eta$ for some~$(x, y) \in \E$, with say~$  \eta (x) =   \xi (y) = 1$,
 $$ q (\eta, \xi) = \frac{\rho_x}{\deg (x)} \quad \hbox{and} \quad q (  \xi,   \eta) = \frac{\rho_y}{\deg (y)}. $$
 In this case, because~$\eta \setminus \{x \} = \xi \setminus \{y \}$,
\begin{equation}
\label{eq:reversible-2}
 \begin{array}{rcl}
 \displaystyle \pi_K (  \eta) \,q (  \eta,   \xi) & \n = \n &
 \displaystyle \frac{D (\eta)}{\Sigma (\Lambda_K)} \ q (  \eta,   \xi) =
 \displaystyle \frac{D (\eta \setminus \{x \})}{\Sigma (\Lambda_K)} \bigg(\frac{\deg (x)}{\rho_x} \bigg) q (  \eta,   \xi) \vspace*{8pt} \\ & \n = \n &
 \displaystyle \frac{D (\eta \setminus \{x \})}{\Sigma (\Lambda_K)} =
 \displaystyle \frac{D (\xi \setminus \{y \})}{\Sigma (\Lambda_K)} =
 \displaystyle \pi_K (  \xi) \,q (  \xi,   \eta). \end{array}
\end{equation}
 Combining~\eqref{eq:reversible-1} and~\eqref{eq:reversible-2} gives the result.
\end{proof} \\ \\
\begin{proofof}{Theorem~\ref{th:limit}}
 According to Lemma~\ref{lem:reversible}, $\pi_K$ is a reversible distribution so this is also a stationary distribution.
 See e.g.~\cite[Sec.~10.3]{lanchier_2017} for a proof.
 Now, according to~Lemma~\ref{lem:class}, the set~$\Lambda_K$ is a finite closed communication class for the simple exclusion process therefore there
 is a unique stationary distribution that concentrates on~$\Lambda_K$, and this distribution is the limit of the process starting
 from~$\eta_0 \in \Lambda_K$.
 See e.g.~\cite[Sec.~10.4]{lanchier_2017} for a proof.
 In particular,
 $$ \lim_{t \to \infty} P (\eta_t = \eta \,| \,\eta_0 = \xi) = \pi_K (\eta) = \frac{D (\eta)}{\Sigma (\Lambda_K)} \quad
    \hbox{for all} \quad \eta, \xi \in \Lambda_K $$
 from which it follows that
 $$ p_K (x) = \sum_{\eta \in \Lambda_K : x \in \eta} \pi_K (\eta)
            = \sum_{\eta \in \Lambda_K : x \in \eta} \frac{D (\eta)}{\Sigma (\Lambda_K)}
            = \sum_{\eta \in \Lambda_K^+ (x)} \frac{D (\eta)}{\Sigma (\Lambda_K)}
            = \frac{\Sigma (\Lambda_K^+ (x))}{\Sigma (\Lambda_K)}. $$
 This shows the second part of the theorem, and
 $$ \frac{p_K (x)}{p_K (y)} = \frac{\Sigma (\Lambda_K^+ (x))}{\Sigma (\Lambda_K)} \ \frac{\Sigma (\Lambda_K)}{\Sigma (\Lambda_K^+ (x))}
                            = \frac{\Sigma (\Lambda_K^+ (x))}{\Sigma (\Lambda_K^+ (y))}. $$
 Finally, using that the expected value is linear, we get
 $$ \sum_{z \in \V} \,p_K (z) = \lim_{t \to \infty} \sum_{z \in \V} \,E (\ind \{z \in \eta_t \})
                              = \lim_{t \to \infty} E \bigg(\sum_{z \in \V} \,\ind \{z \in \eta_t \} \bigg) = E (K) = K. $$
 This completes the proof.
\end{proofof}



\section{Proof of Theorem~\ref{th:monotonicity}}
\label{sec:monotonicity}
 In the presence of one or~$N$ particles, we have 
\begin{equation}
\label{eq:monotonicity-0}
  \frac{\Sigma (\Lambda_1^+ (x))}{\Sigma (\Lambda_1^+ (y))} = \frac{D (x)}{D (y)} = \frac{\rho_y \deg (x)}{\rho_x \deg (y)} \quad \hbox{and} \quad
  \frac{\Sigma (\Lambda_N^+ (x))}{\Sigma (\Lambda_N^+ (y))} = \frac{D (\V)}{D (\V)} = 1.
\end{equation}
 In all the other cases, however, the ratios above become much more complicated.
 Also, the theorem cannot be proved using direct calculations.
 The main ingredient is given by the next lemma whose proof relies on a somewhat sophisticated construction.
\begin{lemma} --
\label{lem:injection}
 For all~$0 < K < N$, we have~$\Sigma (\Lambda_{K + 1}) \,\Sigma (\Lambda_{K - 1}) < (\Sigma (\Lambda_K))^2$.
\end{lemma}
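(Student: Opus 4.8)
The plan is to recognize that $\Sigma(\Lambda_K) = \sum_{|\eta| = K} \prod_{z \in \eta} D(z)$ is precisely the $K$-th elementary symmetric polynomial in the positive weights $D(z_1), \ldots, D(z_N)$, so the claim is the strict log-concavity of these polynomials. Rather than invoke Newton's inequalities directly, I would prove it by a weight-preserving injection, which matches the name of the lemma. I would expand both sides as sums over \emph{ordered} pairs of subsets: the right-hand side is $\sum_{\alpha, \beta \in \Lambda_K} D(\alpha) D(\beta)$ and the left-hand side is $\sum_{\eta \in \Lambda_{K+1}, \, \xi \in \Lambda_{K-1}} D(\eta) D(\xi)$. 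The key observation is that each product telescopes into a monomial that only depends on the multiplicities with which vertices appear. Concretely, to a pair I would attach the disjoint sets $A = \alpha \cap \beta$ and $B = \alpha \triangle \beta$ (respectively $A = \eta \cap \xi$ and $B = \eta \triangle \xi$), and note that in both cases $D(\alpha) D(\beta) = D(\eta) D(\xi) = \prod_{z \in A} D(z)^2 \prod_{z \in B} D(z) =: w(A, B)$, a strictly positive weight.

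The next step is to count, for each fixed disjoint pair $(A, B)$, how many ordered pairs on each side produce the monomial $w(A, B)$. On the right, $\alpha$ and $\beta$ each contain $A$ and split $B$ into two equal halves, which forces $|B|$ even and yields $\binom{|B|}{|B|/2}$ ordered pairs. On the left, $\eta$ contains $A$ together with $|B|/2 + 1$ elements of $B$ and $\xi$ contains $A$ together with the remaining $|B|/2 - 1$ elements, which again forces $|B|$ even, requires $|B| \geq 2$, and yields $\binom{|B|}{|B|/2 + 1}$ ordered pairs. In both cases the surviving index set is the same, namely disjoint pairs $(A, B)$ with $|B|$ even and $|A| + |B|/2 = K$, so I would rewrite the identity as $\Sigma(\Lambda_K)^2 = \sum_{(A,B)} \binom{|B|}{|B|/2} \, w(A, B)$ and $\Sigma(\Lambda_{K+1}) \Sigma(\Lambda_{K-1}) = \sum_{(A,B)} \binom{|B|}{|B|/2 + 1} \, w(A, B)$, where the second sum simply drops the $|B| = 0$ terms.

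The conclusion then follows from the elementary comparison of binomial coefficients $\binom{n}{n/2 + 1} < \binom{n}{n/2}$, valid for every even $n \geq 0$ (with the convention that the left coefficient is $0$ when $n = 0$). Since every $w(A, B)$ is strictly positive and the right-hand side genuinely contains the positive $|B| = 0$ contributions $w(A, \varnothing)$ (corresponding to the diagonal pairs $\alpha = \beta$) that have no counterpart on the left, summing the term-by-term inequality gives the strict bound $\Sigma(\Lambda_{K+1}) \Sigma(\Lambda_{K-1}) < \Sigma(\Lambda_K)^2$. Equivalently, within each $(A, B)$-class the central binomial coefficient dominates, so there is an injection from the $(|B|/2 + 1)$-subsets of $B$ into the $(|B|/2)$-subsets of $B$, and this assembles into a global weight-preserving injection of $\Lambda_{K+1} \times \Lambda_{K-1}$ into $\Lambda_K \times \Lambda_K$ that misses the diagonal. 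I expect the main obstacle to be exactly this last point: producing a \emph{coherent} explicit injection (that is, a consistent rule for which particle to move from $\eta$ into $\xi$) rather than merely comparing cardinalities. I would handle it with a symmetric chain decomposition of the Boolean lattice on $B$, noting that every chain reaching level $|B|/2 + 1$ also passes through level $|B|/2$; the purely enumerative version above sidesteps the explicit construction altogether, so it is the safer route if the bookkeeping of the injection becomes delicate.
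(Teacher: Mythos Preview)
Your proposal is correct and is essentially the paper's own argument in different clothing: the paper indexes fibers by the multiplicity vector $u\in\{0,1,2\}^N$, which is exactly your pair $(A,B)$ with $A=\{x_i:u_i=2\}$ and $B=\{x_i:u_i=1\}$, observes the weight is constant on each fiber, and compares the fiber sizes $\binom{|B|}{|B|/2+1}<\binom{|B|}{|B|/2}$ together with the extra diagonal ($|B|=0$) contributions on the $\Lambda_K\times\Lambda_K$ side. Your closing remarks about an explicit symmetric-chain injection go slightly beyond what the paper does, but as you note the enumerative comparison already suffices.
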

\begin{proof}
 The key is to find partitions~$\mathcal P$ of $\Lambda_{K + 1} \times \Lambda_{K - 1}$ and~$\mathcal Q$ of~$\Lambda_K \times \Lambda_K$ such that
\begin{enumerate}
 \item partition~$\mathcal P$ has less elements than partition~$\mathcal Q$, \vspace*{4pt}
 \item each~$A_i \in \mathcal P$ can be paired with a~$B_i \in \mathcal Q$ such that~$\card (A_i) \leq \card (B_i)$, \vspace*{4pt}
 \item for all~$(\eta, \eta') \in A_i$ and~$(\xi, \xi') \in B_i$, we have~$D (\eta) \,D (\eta') = D (\xi) \,D (\xi')$.
\end{enumerate}
 Let $x_1, x_2, \ldots, x_N $ denote the $N$ vertices.
 To construct the partitions, let
 $$ S_{2K} = \{(u_1, u_2, \ldots, u_N) \in \{0, 1, 2 \}^N : u_1 + \cdots + u_N = 2K \} $$
 and~$\phi : \Lambda_{K + 1} \times \Lambda_{K - 1} \to S_{2K}$ and~$\psi : \Lambda_K \times \Lambda_K \to S_{2K}$ defined as
\begin{equation}
\label{eq:injection-0}
  \begin{array}{rclcrcl}
  \phi (\eta, \eta') & \n = \n & u = (u_1, u_2, \ldots, u_N) & \hbox{where} & u_i & \n = \n & \ind \{x_i \in \eta \} + \ind \{x_i \in \eta' \} \vspace*{4pt} \\
  \psi (\xi, \xi') & \n = \n & u = (u_1, u_2, \ldots, u_N) & \hbox{where} & u_i & \n = \n & \ind \{x_i \in \xi \} + \ind \{x_i \in \xi' \}. \end{array}
\end{equation}
 The two functions have the same expression but differ in that they are not defined on the same sets of configurations. The two partitions are then given by
 $$ \begin{array}{rcl}
    \mathcal P & \n = \n & \{\phi^{-1} (u) : u \in S_{2K} \ \hbox{and} \ \phi^{-1} (u) \neq \varnothing \}  \vspace*{4pt} \\
    \mathcal Q & \n = \n & \{\psi^{-1} (u) : v \in S_{2K} \ \hbox{and} \ \psi^{-1} (u) \neq \varnothing \}. \end{array} $$
 See Figure~\ref{fig:injection} for a schematic representation of the two partitions and an illustration of the proof presented below.
 The function~$\psi$ is surjective.
 In contrast, $\phi (\eta, \eta')$ has~$\card (\eta \setminus \eta') \geq 2$ coordinates equal to one therefore it is not surjective:
 $$ S_{2K}^* = \{u \in S_{2K} : \phi^{-1} (u) \neq \varnothing \} \neq S_{2K} $$
 from which it follows that
\begin{equation}
\label{eq:injection-1}
  \card (\mathcal P) = \card (S_{2K}^*) < \card (S_{2K}) = \card (\mathcal Q).
\end{equation}
\begin{figure}[t]
\centering
\scalebox{0.45}{\input{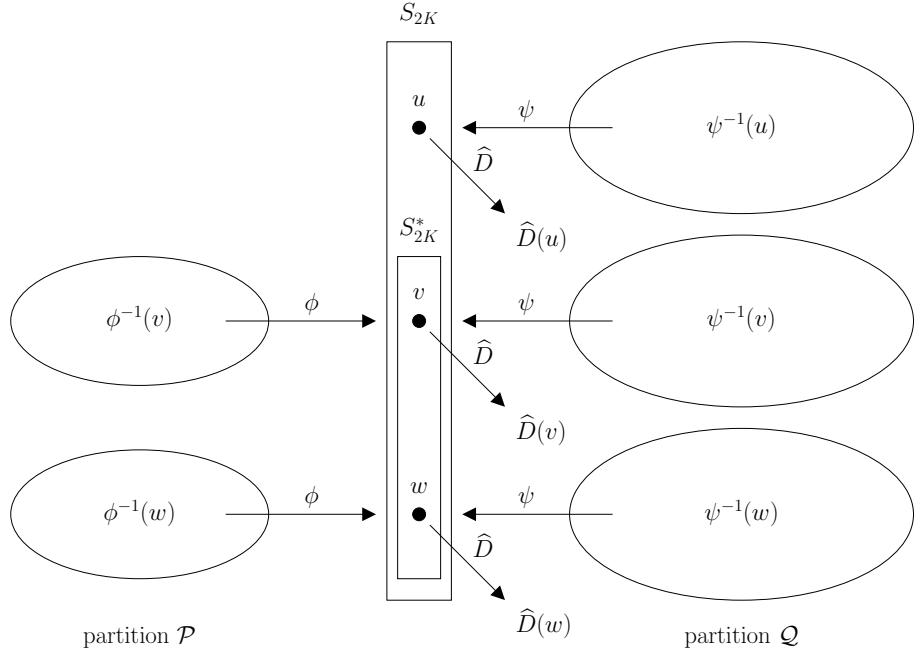}}
\caption{\upshape{Construction in the proof of Lemma~\ref{lem:injection}.}}
\label{fig:injection}
\end{figure}
 This proves the first item above. Now, let
 $$ K_1 = \card \{i : u_i = 1 \} \quad \hbox{and} \quad K_2 = \card \{i : u_i = 2 \}, $$
 with~$K_1 \geq 2$.
 To count the number of preimages~$(\eta, \eta')$ and~$(\xi, \xi')$ in~\eqref{eq:injection-0}, note that the vertices that are either empty
 or occupied in all four configurations are uniquely determined by~$u$.
 This leaves~$K_1$ vertices that are occupied in two of the four configurations and
\begin{itemize}
 \item the number of choices for~$\eta$ is the number of choices of~$K + 1 - K_2$ vertices among~$K_1$ vertices to be occupied in configuration~$\eta$ but not~$\eta'$, \vspace*{4pt}
 \item the number of choices for~$\xi$ is the number of choices of~$K - K_2$ vertices among~$K_1$ vertices to be occupied in configuration~$\xi$ but not~$\xi'$.
\end{itemize}
 Using also that~$K_1 + 2K_2 = 2K < 2K + 1$, we get
\begin{equation}
\label{eq:injection-2}
\begin{array}{rcl}
 \displaystyle \card (\phi^{-1} (u)) & \n = \n &
 \displaystyle {K_1 \choose K + 1 - K_2} = \bigg(\frac{K_1 - K + K_2}{K + 1 - K_2} \bigg) {K_1 \choose K - K_2} \vspace*{8pt} \\ & \n < \n &
 \displaystyle {K_1 \choose K - K_2} = \card (\psi^{-1} (u)), \end{array}
\end{equation}
 which shows the second item above.
 Finally, for all~$(\eta, \eta') \in \phi^{-1} (u)$,
 $$ \begin{array}{rcl}
      D (\eta) \,D (\eta') & \n = & \displaystyle \prod_{z \in \eta} \,\frac{\deg (z)}{\rho_z} \prod_{z \in \eta'} \,\frac{\deg (z)}{\rho_z} =
                                    \displaystyle \prod_{z \in \eta \Delta \eta'} \frac{\deg (z)}{\rho_z}
                                    \displaystyle \prod_{z \in \eta \cap \eta'} \bigg(\frac{\deg (z)}{\rho_z} \bigg)^2 \vspace*{4pt} \\
                           & \n = & \displaystyle \prod_{i = 1}^N \ \bigg(\frac{\deg (x_i)}{\rho_{x_i}} \bigg)^{u_i} = \widehat D (u) \end{array} $$
 is a function~$\widehat D (u)$ of the vector~$u$ only.
 The same holds for~$(\xi, \xi') \in \psi^{-1} (u)$.
 This implies that the third item above is also satisfied in the sense that
\begin{equation}
\label{eq:injection-3}
  D (\eta) \,D (\eta') = D (\xi) \,D (\xi') = \widehat D (u) \quad \hbox{for all} \quad (\eta, \eta') \in \phi^{-1} (u), (\xi, \xi') \in \psi^{-1} (u).
\end{equation}
 Combining~\eqref{eq:injection-1}--\eqref{eq:injection-3}, we conclude that
 $$ \begin{array}{rcl}
    \displaystyle \Sigma (\Lambda_{K + 1}) \,\Sigma (\Lambda_{K - 1}) & \n = \n &
    \displaystyle \sum_{u \in S_{2K}^*} \ \sum_{(\eta, \eta') \in \phi^{-1} (u)} D (\eta) \,D (\eta') \ \overset{\eqref{eq:injection-3}}{=}
    \displaystyle \sum_{u \in S_{2K}^*} \ \sum_{(\eta, \eta') \in \phi^{-1} (u)} \widehat D (u) \vspace*{8pt} \\ & \n \overset{\eqref{eq:injection-2}}{<} \n &
    \displaystyle \sum_{u \in S_{2K}^*} \ \sum_{(\xi, \xi') \in \psi^{-1} (u)} \widehat D (u) \ \overset{\eqref{eq:injection-3}}{=}
    \displaystyle \sum_{u \in S_{2K}^*} \ \sum_{(\xi, \xi') \in \psi^{-1} (u)} D (\xi) \,D (\xi') \vspace*{8pt} \\ & \n \overset{\eqref{eq:injection-1}}{<} \n &
    \displaystyle \sum_{u \in S_{2K}} \ \sum_{(\xi, \xi') \in \psi^{-1} (u)} D (\xi) \,D (\xi') =
    \displaystyle (\Sigma (\Lambda_K))^2. \end{array} $$
 This completes the proof.
\end{proof} \\ \\
 With the previous technical lemma, we can now prove the theorem. \\ \\
\begin{proofof}{Theorem~\ref{th:monotonicity}}
 To simplify the notations, we write
 $$ A_K = \Sigma (\Lambda_K^+ (\{x, y \})) \quad \hbox{and} \quad B_K = \Sigma (\Lambda_{K - 1}^- (\{x, y \})). $$
 Then, we can rewrite
 $$ \begin{array}{rcl}
    \Sigma (\Lambda_K^+ (x)) & \n = \n &
    \Sigma (\Lambda_K^+ (x) \cap \Lambda_K^+ (y)) + \Sigma (\Lambda_K^+ (x) \setminus \Lambda_K^+ (y)) \vspace*{4pt} \\ \hspace*{80pt} & \n = \n &
    \Sigma (\Lambda_K^+ (\{x, y \})) + D (x) \,\Sigma (\Lambda_{K - 1}^- (\{x, y \})) = A_K + D (x) \,B_K. \end{array} $$
 Using some obvious symmetry, we deduce that
\begin{equation}
\label{eq:monotonicity-1}
  \frac{p_K (x)}{p_K (y)} = \frac{\Sigma (\Lambda_K^+ (x))}{\Sigma (\Lambda_K^+ (y))} = \frac{A_K + D (x) \,B_K}{A_K + D (y) \,B_K}.
\end{equation}
 Now, applying Lemma~\ref{lem:injection} to the configurations on~$\V - \{x, y \}$, we get
 $$ \begin{array}{rcl}
     A_K B_{K + 1} & \n = \n & \Sigma (\Lambda_K^+ (\{x, y \})) \,\Sigma (\Lambda_K^- (\{x, y \})) \vspace*{4pt} \\
                   & \n = \n &  D (\{x, y \}) \,\Sigma (\Lambda_{K - 2}^- (\{x, y \})) \,\Sigma (\Lambda_K^- (\{x, y \})) \vspace*{4pt} \\
                   & \n < \n &  D (\{x, y \}) \,\Sigma (\Lambda_{K - 1}^- (\{x, y \})) \,\Sigma (\Lambda_{K - 1}^- (\{x, y \})) \vspace*{4pt} \\
                   & \n = \n & \Sigma (\Lambda_{K + 1}^+ (\{x, y \})) \,\Sigma (\Lambda_{K - 1}^- (\{x, y \})) = A_{K + 1} B_K. \end{array} $$
 This, together with~$D (x) < D (y)$, implies that
 $$ D (x) \,A_{K + 1} B_K + D (y) \,A_K B_{K + 1} < D (x) \,A_K B_{K + 1} + D (y) \,A_{K + 1} B_K $$
 which is equivalent to
\begin{equation}
\label{eq:monotonicity-2}
  \begin{array}{l}
    (A_K + D (x) \,B_K)(A_{K + 1} + D (y) \,B_{K + 1}) \vspace*{4pt} \\ \hspace*{80pt} < \
    (A_{K + 1} + D (x) \,B_{K + 1})(A_K + D (y) \,B_K). \end{array}
\end{equation}
 Combining~\eqref{eq:monotonicity-1} and~\eqref{eq:monotonicity-2} gives
\begin{equation}
\label{eq:monotonicity-3}
  \frac{P_K (x \in \eta_t)}{P_K (y \in \eta_t)} = \frac{A_K + D (x) \,B_K}{A_K + D (y) \,B_K}
                                                = \frac{A_{K + 1} + D (x) \,B_{K + 1}}{A_{K + 1} + D (y) \,B_{K + 1}}
                                                = \frac{p_{K + 1} (x)}{p_{K + 1} (y)}.
\end{equation}
 The theorem is then a combination of~\eqref{eq:monotonicity-0} and~\eqref{eq:monotonicity-3}.
\end{proofof}

	
\section{Star and path graphs}
\label{sec:star-path}
 We now use Theorem~\ref{th:limit} to find the limiting fraction of time each vertex is occupied in the simple exclusion process on the star and the path
 graphs with~$\rho_x \equiv 1$.
 The reason why an explicit calculation is possible in these cases is because most of the vertices have the same degree. \\ \\
\begin{proofof}{Theorem~\ref{th:star}}
 The center~0 is connected to all the other~$N - 1$ vertices therefore its degree is~$N - 1$.
 The other vertices~$1, 2, \ldots, N - 1$, called leaves, are only connected to the center and therefore have degree one.
 For the center, we compute
\begin{equation}
\label{eq:star-1}
 \Sigma (\Lambda_K^+ (0)) = \deg (0) \ \Sigma (\Lambda_{K - 1}^- (0)) = (N - 1) {N - 1 \choose K - 1},
\end{equation}
 while for all~$x = 1, 2, \ldots, N - 1$, we have
 $$ \begin{array}{rcl}
    \Sigma (\Lambda_K^+ (x)) & \n = \n & \deg (x) \ \Sigma (\Lambda_{K - 1}^- (x)) = \Sigma (\Lambda_{K - 1}^- (x)) \vspace*{4pt} \\ & \n = \n &
    \Sigma (\Lambda_{K - 1}^- (x) \cap \Lambda_{K - 1}^+ (0)) + \Sigma (\Lambda_{K - 1}^- (x) \cap \Lambda_{K - 1}^- (0)) \vspace*{4pt} \\ & \n = \n &
    \deg (0) \ \Sigma (\Lambda_{K - 2}^- (0, x)) + \Sigma (\Lambda_{K - 1}^- (0, x)) \end{array} $$
 from which it follows that
\begin{equation}
\label{eq:star-2}
 \Sigma (\Lambda_K^+ (x)) = (N - 1) {N - 2 \choose K - 2} + {N - 2 \choose K - 1}.
\end{equation}
 Note also that the total mass is
\begin{equation}
 \label{eq:star-3}
 \Sigma (\Lambda_K) = \Sigma (\Lambda_K^+ (0)) + \Sigma (\Lambda_K^- (0)) = (N - 1) {N - 1 \choose K - 1} + {N - 1 \choose K}.
\end{equation}
 Combining~\eqref{eq:star-1}--\eqref{eq:star-3}, using the identities
 $$ {N - 1 \choose K - 1} = \frac{N - 1}{K - 1} \ {N - 2 \choose K - 2} = \frac{N - 1}{N - K} \ {N - 2 \choose K - 1} = \frac{K}{N - K} \ {N - 1 \choose K}, $$
 applying Theorem~\ref{th:limit}, and simplifying, we deduce that
\begin{equation}
\label{eq:star-4}
\begin{array}{rcl}
 \displaystyle p_K (0) = \frac{\Sigma (\Lambda_K^+ (0))}{\Sigma (\Lambda_K)} & \n = \n &
 \displaystyle \frac{(N - 1) K}{(N - 1) K + (N - K)} \vspace*{8pt} \\
 \displaystyle p_K (x) = \frac{\Sigma (\Lambda_K^+ (x))}{\Sigma (\Lambda_K)} & \n = \n &
 \displaystyle \bigg(\frac{K}{N - 1} \bigg) \frac{(N - 1) K - (K - 1)}{(N - 1) K + (N - K)}. \end{array}
\end{equation}
 Using again~\eqref{eq:star-1} and~\eqref{eq:star-2}, or alternatively~\eqref{eq:star-4}, we also get
\begin{equation}
\label{eq:star-5}
  \frac{p_K (x)}{p_K (0)} = \frac{\Sigma (\Lambda_K^+ (x))}{\Sigma (\Lambda_K^+ (0))} = \frac{(N - 1)(K - 1) + (N - K)}{(N - 1)^2}.
\end{equation}
 The right-hand side is equal to~$1 / (N - 1)$ when~$K = 1$ and one when~$K = N$, and is increasing with respect to the number of particles, in accordance
 with Theorem~\ref{th:monotonicity}.
 Note also that, for the star graph, the ratio above is linear in the number~$K$ of particles.
\end{proofof} \\ \\
\begin{proofof}{Theorem~\ref{th:path}}
 The end vertices~0 and~$N - 1$ each have degree one while the other vertices~$1, 2, \ldots, N - 2$, each have degree two.
 For the end nodes, we compute
 $$ \begin{array}{rcl}
    \Sigma (\Lambda_K^+ (0)) & \n = \n & \deg (0) \ \Sigma (\Lambda_{K - 1}^- (0)) = \Sigma (\Lambda_{K - 1}^- (0)) \vspace*{4pt} \\ & \n = \n &
    \Sigma (\Lambda_{K - 1}^- (0) \cap \Lambda_{K - 1}^+ (N - 1)) + \Sigma (\Lambda_{K - 1}^- (0) \cap \Lambda_{K - 1}^- (N - 1)) \vspace*{4pt} \\ & \n = \n &
    \deg (N - 1) \ \Sigma (\Lambda_{K - 2}^- (0, N - 1)) + \Sigma (\Lambda_{K - 1}^- (0, N - 1)) \end{array} $$
 from which it follows that
\begin{equation}
 \label{eq:path-1}
 \Sigma (\Lambda_K^+ (0)) = \Sigma (\Lambda_K^+ (N - 1)) = 2^{K - 2} {N - 2 \choose K - 2} + 2^{K - 1} {N - 2 \choose K - 1}.
\end{equation}
 Similarly, for all~$0 < x < N - 1$,
 $$ \Sigma (\Lambda_K^+ (x)) = \deg (x) \ \Sigma (\Lambda_{K - 1}^- (x)) = 2 \,\Sigma (\Lambda_{K - 1}^- (x)), $$
 and including and/or excluding~0 and/or~$N - 1$, we get
\begin{equation}
\label{eq:path-2}
\begin{array}{rcl}
 \Sigma (\Lambda_K^+ (x)) & \n = \n &
 \displaystyle 2^{K - 2} {2 \choose 2}{N - 3 \choose K - 3} + 2^{K - 1} {2 \choose 1}{N - 3 \choose K - 2} + 2^K {2 \choose 0}{N - 3 \choose K - 1} \vspace*{8pt} \\ & \n = \n &
 \displaystyle 2^{K - 2} {N - 3 \choose K - 3} + 2^K {N - 3 \choose K - 2} + 2^K {N - 3 \choose K - 1} \vspace*{8pt} \\ & \n = \n &
 \displaystyle 2^{K - 2} {N - 3 \choose K - 3} + 2^K {N - 2 \choose K - 1}. \end{array}
\end{equation}
 The total mass in this case is
\begin{equation}
 \label{eq:path-3}
 \begin{array}{rcl}
  \Sigma (\Lambda_K) & \n = \n &
  \displaystyle 2^{K - 2} {2 \choose 2}{N - 2 \choose K - 2} + 2^{K - 1} {2 \choose 1}{N - 2 \choose K - 1} +
                2^K {2 \choose 0}{N - 2 \choose K} \vspace*{8pt} \\ & \n = \n &
  \displaystyle 2^{K - 2} {N - 2 \choose K - 2} + 2^K {N - 1 \choose K}. \end{array}
\end{equation}
 Combining~\eqref{eq:path-1}--\eqref{eq:path-3}, using the identities
 $$ \begin{array}{rcl}
    \displaystyle {N - 2 \choose K - 1} & \n = \n &
    \displaystyle \frac{N - K}{K - 1} \ {N - 2 \choose K - 2} \vspace*{8pt} \\ & \n = \n &
    \displaystyle \frac{(N - 2)(N - K)}{(K - 2)(K - 1)} \ {N - 3 \choose K - 3} = \frac{K}{N - 1} \ {N - 1 \choose K}, \end{array} $$
 applying Theorem~\ref{th:limit}, and simplifying, we deduce that
\begin{equation}
\label{eq:path-4}
 \begin{array}{rcl}
  \displaystyle p_K (0) = \frac{\Sigma (\Lambda_K^+ (0))}{\Sigma (\Lambda_K)} & \n = \n &
  \displaystyle \frac{(2N - K - 1) K}{(K - 1) K + 4 (N - K)(N - 1)} \vspace*{8pt} \\
  \displaystyle p_K (x) = \frac{\Sigma (\Lambda_K^+ (x))}{\Sigma (\Lambda_K)} & \n = \n &
  \displaystyle \bigg(\frac{K}{N - 2} \bigg) \frac{(K - 2)(K - 1) + 4 (N - K)(N - 2)}{(K - 1) K + 4 (N - K)(N - 1)}. \end{array}
\end{equation}
 Using again~\eqref{eq:path-1} and~\eqref{eq:path-2}, or alternatively~\eqref{eq:path-4}, we also get
\begin{equation}
\label{eq:path-5}
  \frac{p_K (0)}{p_K (x)} = \frac{\Sigma (\Lambda_K^+ (0))}{\Sigma (\Lambda_K^+ (x))} = \frac{(N - 2)(2N - K - 1)}{(K - 2)(K - 1) + 4 (N - K)(N - 2)}.
\end{equation}
 The right-hand side is equal to one-half when~$K = 1$ and one when~$K = N$, and is increasing with respect to the number of particles, in accordance
 with Theorem~\ref{th:monotonicity}.
\end{proofof}


\bibliographystyle{plain}
\bibliography{biblio.bib}

\end{document}